\newtheorem{thm}{Theorem}[section]
\newtheorem{lem}{Lemma}[section]
\newtheorem{cor}{Corollary}[section]
\newtheorem{defi}{Definition}[section]
\newtheorem{rem}{Remark}[section]
\begin{document}

\title{A note on odometers and shadowing}
\author{Noriaki Kawaguchi}
\subjclass[2020]{37B20, 37B65}
\keywords{odometer, shadowing, dense, minimal, regularly recurrent}
\address{Department of Mathematical and Computing Science, School of Computing, Institute of Science Tokyo, 2-12-1 Ookayama, Meguro-ku, Tokyo 152-8552, Japan}
\email{gknoriaki@gmail.com}

\begin{abstract}
For a continuous self-map of a compact metric space, we provide a sufficient condition for the orbit of a point to converge to a periodic orbit or an odometer. We show that if a continuous self-map of a compact metric space has the shadowing property, then the set of points whose orbits converge to periodic orbits or odometers is dense.
\end{abstract}

\maketitle

\markboth{NORIAKI KAWAGUCHI}{A note on odometers and shadowing}

\section{Introduction}

{\em Shadowing} is an important concept in the topological theory of dynamical systems. It was initially introduced in the context of hyperbolic differentiable dynamics \cite{An,B} and generally refers to a situation in which coarse orbits, or {\em pseudo-orbits}, are approximated by true orbits (see \cite{AH} or \cite{P} for background). The topological implications of shadowing have been studied in, e.g., \cite{LO1,LO2,M,MO}. In particular, it is shown in \cite{LO2} that if a continuous self-map of a compact metric space has the shadowing property, then periodic orbits and odometers are dense within its chain recurrent set.

{\em Odometers} are a class of minimal sets in dynamical systems. Together with periodic orbits, they represent regular periodic motion in dynamical systems. In \cite{BK}, an odometer is characterized as an infinite minimal set such that all points are regularly recurrent. In this paper, we study a point whose orbit converges to a periodic point or an odometer. Such an orbit asymptotically resembles  a periodic orbit or an odometer and so exhibits simple asymptotic behavior.

We begin by defining odometers. Given a nondecreasing sequence $m=(m_j)_{j\ge1}$ of positive integers such that $m_j|m_{j+1}$ for each $j\ge1$ and $\lim_{j\to\infty}m_j=\infty$, we define
\begin{itemize}
\item $X(j)=\{0,1,\dots,m_j-1\}$ (with the discrete topology),
\item
\[
X_m=\{(x_j)_{j\ge1}\in\prod_{j\ge1}X(j)\colon x_j\equiv x_{j+1}\pmod{m_j}\:\:\text{for all $j\ge1$}\},
\]
\item $g_m(x)_j=x_j+1\pmod{m_j}$ for all $x=(x_j)_{j\ge1}\in X_m$ and $j\ge1$.
\end{itemize}
We regard $X_m$ as a subspace of the product space $\prod_{j\ge1}X(j)$. The homeomorphism
\[
g_m\colon X_m\to X_m
\]
is called an odometer with the periodic structure $m$. For a continuous self-map $f\colon X\to X$ of a compact metric space $X$, a subset $S$ of $X$ is said to be {\em $f$-invariant} if $f(S)\subset S$. We say that a closed $f$-invariant subset $S$ of $X$ is an {\em odometer} if
\[
f|_S\colon S\to S
\]
is topologically conjugate to an odometer, i.e., there are an odometer $g_m\colon X_m\to X_m$ and a homeomorphism $h\colon X_m\to S$ such that $f|_S\circ h=h\circ g_m$.

Throughout, $X$ denotes a compact metric space endowed with a metric $d$. For a continuous map $f\colon X\to X$ and $x\in X$, the {\em $\omega$-limit set} $\omega(x,f)$ of $x$ for $f$ is defined as the set of $y\in X$ such that
\[
\lim_{j\to\infty}f^{i_j}(x)=y
\]
for some sequence $0\le i_1<i_2<\cdots$. Note that
\[
\lim_{i\to\infty}d(f^i(x),\omega(x,f))=0
\]
for all $x\in X$. We say that a closed $f$-invariant subset $S$ of $X$ is a {\em periodic orbit} if there are $x\in S$ and $i>0$ such that $f^i(x)=x$ and $S=\{x,f(x),\dots,f^{i-1}(x)\}$. We denote by $\mathbb{A}(f)$ the set of $x\in X$ such that $\omega(x,f)$ is a periodic orbit or an odometer:
\[
\mathbb{A}(f)=\{x\in X\colon\text{$\omega(x,f)$ is a periodic orbit or an odometer}\}.
\]

\begin{rem}
\normalfont
For any continuous map $f\colon X\to X$ and $x\in X$, if $x\in\mathbb{A}(f)$, then
\[
\lim_{i\to\infty}d(f^i(x),f^i(y))=0
\]
for some $y\in\omega(x,f)$.
\end{rem}

The first result gives a sufficient condition for the orbit of a point to converge to a periodic orbit or an odometer.

\begin{thm}
Let $f\colon X\to X$ be a continuous map. Given any $x\in X$, if there are $x_{\epsilon}\in\mathbb{A}(f)$, $\epsilon>0$, such that
\[
\limsup_{i\to\infty}d(f^i(x),f^i(x_\epsilon))\le\epsilon
\]
for all $\epsilon>0$, then $x\in\mathbb{A}(f)$.
\end{thm}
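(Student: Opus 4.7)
The plan is to show that $\Omega:=\omega(x,f)$ is minimal and that every point of $\Omega$ is regularly recurrent. Once both are established, the characterisation from \cite{BK} implies $\Omega$ is a periodic orbit (if finite) or an odometer (if infinite); in either case $x\in\mathbb{A}(f)$.

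For each $\epsilon>0$, the Remark applied to $x_\epsilon$ gives a point $y_\epsilon\in S_\epsilon:=\omega(x_\epsilon,f)$ with $\lim_i d(f^i(x_\epsilon),f^i(y_\epsilon))=0$, so $\limsup_i d(f^i(x),f^i(y_\epsilon))\le\epsilon$. Since $S_\epsilon$ is a periodic orbit or an odometer, $f|_{S_\epsilon}$ is equicontinuous and minimal; in particular there is $N_\epsilon\ge 1$ with $d(f^{kN_\epsilon}(z),z)\le\epsilon$ for every $z\in S_\epsilon$ and $k\ge 0$. Combining these two facts via the triangle inequality, for every $\eta>0$ there is $I$ with
\[
d(f^{i+kN_\epsilon}(x),f^i(x))\le 3\epsilon+2\eta\qquad\text{for all }i\ge I,\ k\ge 0.
\]
Choosing a subsequence $f^{i_j}(x)\to z\in\Omega$ and noting that $f^{i_j+kN_\epsilon}(x)\to f^{kN_\epsilon}(z)$ by continuity of $f^{kN_\epsilon}$, then letting $\eta\to 0$, one obtains the key uniform estimate, which I denote $(*)$:
\[
d(f^{kN_\epsilon}(z),z)\le 3\epsilon\qquad\text{for every }z\in\Omega\text{ and }k\ge 0.
\]
In particular, every $z\in\Omega$ is regularly recurrent, with the regularity constants uniform over $\Omega$.

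For minimality, fix $z,w\in\Omega$ and $\epsilon>0$. Writing $z=\lim_j f^{i_j}(x)$ and passing to a subsequence so that $f^{i_j}(y_\epsilon)\to z'\in S_\epsilon$ gives $d(z,z')\le\epsilon$; similarly produce $w'\in S_\epsilon$ with $d(w,w')\le\epsilon$ from a sequence $k_\ell$ realising $w$. By minimality of $S_\epsilon$ there is $n\ge 1$ with $d(f^n(z'),w')\le\epsilon$. Passing to the limit in the asymptotic comparison at the shifted indices $n+i_j$ also yields $d(f^n(z),f^n(z'))\le\epsilon$, and hence $d(f^n(z),w)\le 3\epsilon$. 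Applying $(*)$ to $f^n(z)\in\Omega$ shows that $d(f^{n+kN_\epsilon}(z),w)\le 6\epsilon$ for every $k\ge 0$, providing arbitrarily large times at which $f^m(z)$ is $6\epsilon$-close to $w$. A diagonal argument over $\epsilon\to 0$ then gives $w\in\omega(z,f)$, so $\omega(z,f)=\Omega$ for every $z\in\Omega$, and $\Omega$ is minimal.

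The main obstacle I anticipate is the minimality step: because the approximating minimal sets $S_\epsilon$ change with $\epsilon$, minimality of $\Omega$ cannot be inherited from a single one and must be transferred through $(*)$ together with the same asymptotic comparison that produced $(*)$. Once $\Omega$ is minimal with every point regularly recurrent, the conclusion $x\in\mathbb{A}(f)$ follows at once from \cite{BK}.
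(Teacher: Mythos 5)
Your proof is correct, and it follows the same overall reduction as the paper: show that $\omega(x,f)$ is minimal and contained in $RR(f)$, then invoke the Block--Keesling characterization (Lemma 2.2). The mechanics, however, are genuinely different. The paper never uses Remark 1.1: for each $z\in\omega(x,f)$ and each $\epsilon>0$ it extracts, along a single subsequence realising $z$, a companion point $w\in\omega(x_\epsilon,f)$ with $\sup_{i\ge0}d(f^i(z),f^i(w))\le\epsilon$, then transfers $w$'s own regular-recurrence constant to $z$ and uses Lemma 3.1 to get $\omega(x,f)\subset B_{2\epsilon}(\omega(z,f))$; no uniformity over $S_\epsilon$ is required. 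You instead rely on Remark 1.1 (stated in the paper without proof; for an odometer limit set it needs its own phase-locking argument) to produce $y_\epsilon$, and on the uniform regular recurrence of odometers to get one $N_\epsilon$ valid on all of $S_\epsilon$; this buys the clean uniform estimate $(*)$ on all of $\Omega$ at once, at the price of two imported facts the paper's argument avoids. One justification should be repaired: ``$f|_{S_\epsilon}$ is equicontinuous and minimal; in particular there is $N_\epsilon$\dots'' does not follow from equicontinuity and minimality alone (an irrational rotation is both yet admits no such $N$); you need the periodic/odometer structure itself, e.g.\ the cyclic clopen partition of $S_\epsilon$ into sets of diameter at most $\epsilon$ each invariant under $f^{m_j}$, or equicontinuity combined with the fact that $S_\epsilon$ contains a regularly recurrent point. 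With that justification fixed, the remaining steps --- the triangle-inequality estimate, the passages to limits along $i_j$ and $n+i_j$, the bound $d(f^{n+kN_\epsilon}(z),w)\le6\epsilon$, and the diagonal argument giving $w\in\omega(z,f)$ and hence minimality --- all check out.
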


For a continuous map $f\colon X\to X$ and $x\in X$, we say that $f$ is {\em equicontinuous} at $x$ if for every $\epsilon>0$, there is $\delta>0$ such that $d(x,y)\le\delta$ implies
\[
\sup_{i\ge0}d(f^i(x),f^i(y))\le\epsilon
\]
for all $y\in X$. From Theorem 1.1, we obtain the following corollary. 

\begin{cor}
Let $f\colon X\to X$ be a continuous map and let $x\in X$. If $f$ is equicontinuous at $x$ and if $x\in\overline{\mathbb{A}(f)}$, then $x\in\mathbb{A}(f)$.
\end{cor}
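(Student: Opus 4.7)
The plan is to deduce Corollary 1.2 as a direct application of Theorem 1.1 by using equicontinuity at $x$ to produce the required family $\{x_\epsilon\}_{\epsilon>0}$.

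First I would fix an arbitrary $\epsilon>0$. Since $f$ is equicontinuous at $x$, there is $\delta>0$ such that $d(x,y)\le\delta$ forces $\sup_{i\ge0}d(f^i(x),f^i(y))\le\epsilon$ for every $y\in X$. Next, because $x\in\overline{\mathbb{A}(f)}$, I can choose a point $x_\epsilon\in\mathbb{A}(f)$ with $d(x,x_\epsilon)\le\delta$. Then equicontinuity gives
\[
\sup_{i\ge0}d(f^i(x),f^i(x_\epsilon))\le\epsilon,
\]
and in particular
\[
\limsup_{i\to\infty}d(f^i(x),f^i(x_\epsilon))\le\epsilon.
\]

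Since this holds for every $\epsilon>0$, the family $\{x_\epsilon\}_{\epsilon>0}\subset\mathbb{A}(f)$ satisfies the hypothesis of Theorem 1.1, so that theorem yields $x\in\mathbb{A}(f)$, as required. There is essentially no obstacle here beyond correctly matching quantifiers: the corollary is a clean packaging of the quantitative criterion in Theorem 1.1, with equicontinuity at $x$ playing the role of converting a single spatial approximation (density) into the uniform-in-$i$ dynamical approximation required by the theorem.
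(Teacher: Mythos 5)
Your argument is correct and is exactly the derivation the paper intends when it says the corollary follows from Theorem 1.1: equicontinuity at $x$ plus density of $\mathbb{A}(f)$ yields, for each $\epsilon>0$, a point $x_\epsilon\in\mathbb{A}(f)$ with $\sup_{i\ge0}d(f^i(x),f^i(x_\epsilon))\le\epsilon$, which is the hypothesis of Theorem 1.1. Nothing further is needed.
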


Let us recall the notion of chain recurrence.
 
\begin{defi}
\normalfont
Let $f\colon X\to X$ be a continuous map. For $\delta>0$, a finite sequence $(x_i)_{i=0}^k$, $k>0$, of points in $X$ is called a {\em $\delta$-chain} of $f$ if $d(f(x_i),x_{i+1})\le\delta$ for all $0\le i\le k-1$. We say that $x\in X$ is a {\em chain recurrent point} for $f$ if for every $\delta>0$, there is a $\delta$-chain $(x_i)_{i=0}^k$ of $f$ such that $x_0=x_k=x$. We denote by $CR(f)$ the set of chain recurrent points for $f$.
\end{defi}

Next we recall the basic definition of minimality.

\begin{defi}
\normalfont
Let $f\colon X\to X$ be a continuous map. A closed $f$-invariant subset $M$ of $X$ is said to be a {\em minimal set} for $f$ if closed $f$-invariant subsets of $M$ are only $\emptyset$ and $M$. This is equivalent to $M=\overline{\{f^i(x)\colon i\ge0\}}$ for all $x\in M$. The map $f\colon X\to X$ is said to be {\em minimal} if $X$ is a minimal set for $f$. We say that $x\in X$ is a {\em minimal point} for $f$ if $\overline{\{f^i(x)\colon i\ge0\}}$ is a minimal set for $f$. We denote by $M(f)$ the set of minimal points for $f$.
\end{defi}

\begin{rem}
\normalfont
\begin{itemize}
\item Zorn's lemma implies $M(f)\ne\emptyset$.
\item Let $f\colon X\to X$ be a continuous map and let $x\in X$. We know that $x\in M(f)$ if and only if \[
\{i\ge0\colon d(x,f^i(x))\le\epsilon\}
\]
is {\em syndetic} for all $\epsilon>0$, i.e., for any $\epsilon>0$, there is $n\ge1$ such that
\[
\{i\ge0\colon d(x,f^i(x))\le\epsilon\}\cap\{k,k+1,\dots,k+n-1\}\ne\emptyset
\] 
for all $k\ge0$.
\end{itemize}
\end{rem}

Then we recall the definition of regular recurrence.

\begin{defi}
\normalfont
Let $f\colon X\to X$ be a continuous map. We say that $x\in X$ is a {\em regularly recurrent point} for $f$ if for every $\epsilon>0$, there exists $n>0$ such that $d(x,f^{kn}(x))\le\epsilon$ for all $k\ge0$. We denote by $RR(f)$ the set of regularly recurrent points for $f$.
\end{defi}

\begin{rem}
\normalfont
\begin{itemize}
\item Let $f\colon X\to X$ be a continuous map. We say that $x\in X$ is a {\em periodic point} for $f$ if $f^i(x)=x$ for some $i>0$. We denote by $Per(f)$ the set of periodic points for $f$. Note that
\[
Per(f)\subset RR(f)\subset M(f).
\]
\item An odometer $g_m\colon X_m\to X_m$ is minimal and satisfies $X_m=RR(g_m)$. By Corollary 2.5 of \cite{BK}, we know that a minimal continuous map $f\colon X\to X$ satisfies $X=RR(f)$ if and only if $X$ is a periodic orbit or an odometer.
\end{itemize}
\end{rem}

In order to state the other main result, we recall the definition of shadowing.

\begin{defi}
\normalfont
Let $f\colon X\to X$ be a continuous map and let $\xi=(x_i)_{i\ge0}$ be a sequence of points in $X$. For $\delta>0$, $\xi$ is called a {\em $\delta$-pseudo orbit} of $f$ if $d(f(x_i),x_{i+1})\le\delta$ for all $i\ge0$. For $\epsilon>0$, $\xi$ is said to be {\em $\epsilon$-shadowed} by $x\in X$ if $d(f^i(x),x_i)\leq \epsilon$ for all $i\ge 0$. We say that $f$ has the {\em shadowing property} if for any $\epsilon>0$, there is $\delta>0$ such that every $\delta$-pseudo orbit of $f$ is $\epsilon$-shadowed by some point of $X$. 
\end{defi}

In \cite{M}, it is shown that every continuous map $f\colon X\to X$ with the shadowing property satisfies $CR(f)=\overline{M(f)}$; and a question is raised whether every such map $f\colon X\to X$ satisfies $CR(f)=\overline{RR(f)}$. Theorem 3.2 in \cite{MO} gives a positive answer to this question. Moreover, in \cite{LO2}, it is shown that if a continuous map $f\colon X\to X$ has the shadowing property, then
\[
CR(f)=\overline{\{x\in\mathbb{A}(f)\colon x\in\omega(x,f)\}}
\]
(see Corollary 3.3 of \cite{LO2}).

The second main result is the following.

\begin{thm}
If a continuous map $f\colon X\to X$ has the shadowing property, then $X=\overline{\mathbb{A}(f)}$, i.e., $\mathbb{A}(f)$ is a dense subset of $X$.
\end{thm}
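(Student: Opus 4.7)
The plan is to combine the shadowing property with Corollary 3.3 of \cite{LO2} (which gives $CR(f)=\overline{\{u\in\mathbb{A}(f):u\in\omega(u,f)\}}$) and Theorem 1.1 above. For each $x\in X$ and $\eta>0$, my aim is to exhibit some $z\in\mathbb{A}(f)$ with $d(z,x)\le\eta$; density of $\mathbb{A}(f)$ is then immediate.

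First I would carry out a standard shadowing construction. Choose $\delta>0$ so that every $\delta$-pseudo-orbit of $f$ is $\eta$-shadowed. Because $\omega(x,f)\subset CR(f)$ and $\{u\in\mathbb{A}(f):u\in\omega(u,f)\}$ is dense in $CR(f)$, one may pick $N\ge 0$ together with $u\in\mathbb{A}(f)\cap\omega(u,f)$ satisfying $d(f^{N+1}(x),u)\le\delta$. The sequence $\xi$ defined by $\xi_i=f^i(x)$ for $0\le i\le N$ and $\xi_{N+1+j}=f^j(u)$ for $j\ge 0$ is then a $\delta$-pseudo-orbit, and shadowing furnishes $z\in X$ with $d(f^i(z),\xi_i)\le\eta$ for all $i\ge 0$. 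In particular, $d(z,x)\le\eta$ and $d(f^{N+1+j}(z),f^j(u))\le\eta$ for every $j\ge 0$.

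Second, I would apply Theorem 1.1 to conclude $z\in\mathbb{A}(f)$: for each $\epsilon>0$, I must produce $w_\epsilon\in\mathbb{A}(f)$ with $\limsup_i d(f^i(z),f^i(w_\epsilon))\le\epsilon$. Since $\omega(u,f)$ is a periodic orbit or an odometer and $f$ restricts to a homeomorphism on it, there is $w\in\omega(u,f)\subset\mathbb{A}(f)$ with $f^{N+1}(w)=u$; this gives $d(f^i(z),f^i(w))\le\eta$ for every $i\ge N+1$, so $w_\epsilon=w$ suffices whenever $\epsilon\ge\eta$.

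The hard part will be the case $\epsilon<\eta$, where the fixed shadowing quality $\eta$ is no longer sharp enough. My plan is to address this iteratively: for any prescribed $\epsilon>0$, pick $M$ large so that $f^M(z)$ lies within $\eta$ of $CR(f)$, invoke Corollary 3.3 of \cite{LO2} again to find a fresh $u_\epsilon\in\mathbb{A}(f)\cap\omega(u_\epsilon,f)$ within $\epsilon/2$ of $f^M(z)$, and form $w_\epsilon$ as the preimage of $u_\epsilon$ under $f^M$ inside $\omega(u_\epsilon,f)$. The main technical obstacle, and the delicate part of the proof, is arranging matters so that the \emph{same} $z$ satisfies the required $\limsup$-bound simultaneously for every $\epsilon>0$; I expect this to be accomplished by constructing $z$ as the Cauchy limit of shadowing points associated with a nested family of progressively finer pseudo-orbits that agree on initial segments of unbounded length, so that the limiting orbit inherits orbit-tracking estimates at every scale.
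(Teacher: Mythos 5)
Your first shadowing step and the case $\epsilon\ge\eta$ are fine, but the proof stops exactly where the real difficulty begins, and the plan you sketch for $\epsilon<\eta$ does not close the gap. Picking $M$ with $f^M(z)$ near $CR(f)$ and a fresh $u_\epsilon\in\mathbb{A}(f)\cap\omega(u_\epsilon,f)$ within $\epsilon/2$ of $f^M(z)$ gives closeness at the single time $M$ only; nothing prevents the orbits of $z$ and $u_\epsilon$ from separating immediately afterwards, so you get no bound on $\limsup_{i\to\infty}d(f^i(z),f^i(w_\epsilon))$. Shadowing cannot repair this for the \emph{same} $z$: re-shadowing a finer pseudo-orbit produces a \emph{new} tracing point, it does not improve the old one. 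Your proposed fix --- a nested family of pseudo-orbits ``agreeing on initial segments of unbounded length'' whose tracing points converge --- suffers from the same defect: agreement on initial segments controls the limit orbit only on those segments, whereas the hypothesis of Theorem 1.1 is a tail condition, so the estimates you inherit at scale $\epsilon_j$ say nothing about $f^i(z)$ for large $i$.

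The paper's proof resolves precisely this point, in a way your sketch misses in two respects. First (Lemma 3.3), the successive pseudo-orbits are kept $\epsilon_j$-close \emph{uniformly over all} $i\ge0$, not just on initial segments; this is achieved by working with \emph{eventually periodic} pseudo-orbits: one shadows $\xi_j$ by a true point $y$, then closes the orbit of $y$ along a near-return of $\omega(f^k(y),f^{n})$ compatible with the period $n$ of the tail of $\xi_j$, so that every entry of the new pseudo-orbit is $f^{i'}(y)$ with $x^{(j)}_{i'}=x^{(j)}_i$, and the shadowing estimate propagates to all times. Second, even with uniform closeness you could not quote Theorem 1.1 as you intend, because the comparison objects are pseudo-orbits, not orbits of points of $\mathbb{A}(f)$; the paper instead proves a pseudo-orbit analogue (Lemma 3.2), showing directly that for the limit point $x_0$ of the initial entries, $\omega(x_0,f)$ is minimal and contained in $RR(f)$, and then invokes the Block--Keesling characterization (Lemma 2.2) to conclude $x_0\in\mathbb{A}(f)$ with $d(x,x_0)\le\epsilon$. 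So the missing idea is exactly the uniform-in-time refinement via eventually periodic pseudo-orbits together with the regular-recurrence argument; without it, your construction establishes only that points near $x$ eventually $\eta$-track an odometer or periodic orbit, which is weaker than $x\in\overline{\mathbb{A}(f)}$.
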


This paper consists of four sections. Our proofs of Theorems 1.1 and 1.2 exploit the characterization of an odometer as an infinite minimal set in which all points are regularly recurrent (\cite[Corollary 2.5]{BK}). For the sake of completeness, in Section 2, we give a simple alternative proof of this fact. Theorems 1.1 and 1.2 are proved in Section 3. In Section 4, we recall the notion of chain continuity and make some remarks on its relevance to the main results.

\section{Preliminaries}

Our proofs of Theorems 1.1 and 1.2 will be based on the characterization of an odometer as an infinite minimal set such that all points are regularly recurrent (\cite[Corollary 2.5]{BK}). In this section, we give a simple alternative proof of this fact.

A subspace $S$ of $X$ is said to be {\em totally disconnected} if every connected component of $S$ is a singleton. A metric $d$ on $X$ is called an {\em ultrametric} on $X$ if
\[
d(x,z)\le\max\{d(x,y),d(y,z)\}
\]
for all $x,y,z\in X$. We know that a compact metric space $X$ endowed with a metric $d$ is totally disconnected if and only if there is an ultrametic $D$ on $X$ equivalent to $d$.

A map $f\colon X\to X$ is said to be {\em equicontinuous} if for any $\epsilon>0$, there is $\delta>0$ such that $d(x,y)\le\delta$ implies
\[
\sup_{i\ge0}d(f^i(x),f^i(y))\le\epsilon
\]
for all $x,y\in X$. It is known that if an equicontinuous map $f\colon X\to X$ is surjective, then $f$ is a homeomorphism and $f^{-1}$ is also equicontinuous (cf. \cite{AG, Ma}).

For $x\in X$ and $\epsilon>0$, let $B_\epsilon(x)$ denote the closed $\epsilon$-ball centered at $x$:
\[
B_\epsilon(x)=\{y\in X\colon d(x,y)\le\epsilon\}.
\]
For a subset $S$ of $X$ and $\epsilon>0$, we denote by $B_\epsilon(S)$ the $\epsilon$-neighborhood of $S$:
\[
B_\epsilon(S)=\{x\in X\colon d(x,S)=\inf_{y\in S}d(x,y)\le\epsilon\}.
\]
If $d$ is an ultrametric, then $B_\epsilon(x)=B_\epsilon(y)$ for all $\epsilon>0$ and $x,y\in X$ with $d(x,y)\le\epsilon$. This implies that for any $\epsilon>0$ and $x,y\in X$, if $B_\epsilon(x)\cap B_\epsilon(y)\ne\emptyset$, then $B_\epsilon(x)=B_\epsilon(y)$.

A proof of the following lemma can be found also in \cite{Ku} (see Theorem 4.4 of \cite{Ku}).

\begin{lem}
If $X$ is totally disconnected, then for any minimal equicontinuous homeomorphism $f\colon X\to X$, $X$ is a periodic orbit or an odometer.
\end{lem}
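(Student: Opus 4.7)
Since $X$ is compact, totally disconnected, and metric, there is an ultrametric $D$ on $X$ equivalent to $d$; the map $f$ remains a minimal equicontinuous homeomorphism with respect to $D$, so from now on I assume that $d$ itself is an ultrametric. Because $f$ is an equicontinuous homeomorphism of a compact space, $f^{-1}$ is equicontinuous as well, so for each $\epsilon>0$ there exists $\delta\in(0,\epsilon]$ with
\[
d(x,y)\le\delta\ \Longrightarrow\ d(f^i(x),f^i(y))\le\epsilon\ \text{for all}\ i\in\mathbb{Z}.
\]

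The main construction is, for each $\epsilon>0$, to produce a finite clopen partition $\mathcal{Q}_\epsilon$ of $X$ by sets of diameter $\le\epsilon$ that $f$ permutes. The plan is to declare $x\sim_\epsilon y$ iff $d(f^i(x),f^i(y))\le\epsilon$ for every $i\in\mathbb{Z}$. Taking $i=0$ together with the ultrametric property makes each $\sim_\epsilon$-class lie inside a single closed $\epsilon$-ball, hence of diameter $\le\epsilon$; the equicontinuity-induced $\delta$ shows each closed $\delta$-ball is contained in a single class, hence classes are clopen and finitely many. Since $\sim_\epsilon$ is by definition $f$-invariant, $f$ permutes $\mathcal{Q}_\epsilon$. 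Minimality then forces this permutation to be a single cycle: the union of any one cycle is a nonempty clopen $f$-invariant set, and hence equals $X$.

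Next I would choose $\epsilon_n\downarrow 0$ and pass to common refinements, noting that the meet of $f$-invariant partitions is $f$-invariant and is again permuted cyclically by minimality, to obtain a nested sequence $\mathcal{Q}_1\preceq\mathcal{Q}_2\preceq\cdots$ of clopen partitions with $|\mathcal{Q}_n|=m_n$, $m_n\mid m_{n+1}$, and $\max_{Q\in\mathcal{Q}_n}\mathrm{diam}(Q)\to 0$. If $(m_n)$ is bounded, it stabilizes, and since diameters shrink to $0$ each element of the stable partition is a singleton; then $X$ is finite and cyclically permuted by $f$, i.e., a periodic orbit. Otherwise, fix $x_0\in X$ and label $Q_n(i)=f^i(Q_n(0))$ where $Q_n(0)$ is the element of $\mathcal{Q}_n$ containing $x_0$. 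Because the partitions are nested and labeled by the $f$-orbit of the distinguished element, $Q_{n+1}(j)\subset Q_n(i)$ forces $j\equiv i\pmod{m_n}$, which is exactly the compatibility defining $X_m$ for $m=(m_n)_{n\ge1}$.

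The conjugacy $h\colon X\to X_m$ is then defined by $h(x)=(h_n(x))_{n\ge1}$, where $h_n(x)$ is the unique index $i$ with $x\in Q_n(i)$. Each coordinate is locally constant, so $h$ is continuous; the labeling gives $h\circ f=g_m\circ h$; and $h$ is injective because two points with the same image lie in a common $\mathcal{Q}_n$-class for every $n$, whose diameters shrink to $0$. Surjectivity follows since $h(X)$ is closed, $g_m$-invariant, and nonempty, and $g_m$ is minimal on $X_m$. I expect the main obstacle to lie in step two: verifying carefully that $\sim_\epsilon$ really has finitely many, clopen, diameter-$\le\epsilon$ classes that $f$ permutes, and then that common refinements of such partitions inherit the cyclic-permutation structure under minimality --- the rest is a standard inverse-limit identification with the odometer.
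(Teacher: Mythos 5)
Your proposal is correct and takes essentially the same route as the paper: your relation $\sim_\epsilon$ is exactly $D(x,y)\le\epsilon$ for the invariant ultrametric $D(x,y)=\sup_{i\in\mathbb{Z}}d(f^i(x),f^i(y))$ that the paper adopts, so your clopen partitions coincide with the paper's decompositions $X=\bigsqcup_{i=0}^{l_\epsilon-1}f^i(B_\epsilon(x))$, and both arguments then extract nested cyclic partitions with $m_j\mid m_{j+1}$ and identify $X$ with $X_m$ (or a periodic orbit in the bounded case). The only cosmetic differences are that in the paper the partitions are automatically nested, so your passage to common refinements is unnecessary, and the conjugacy is built in the direction $X_m\to X$ rather than $X\to X_m$.
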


\begin{proof}
Since $X$ is totally disconnected, we may assume that $d$ is an ultrametric on $X$. Since $f\colon X\to X$ is an equicontinuous homeomorphism, if necessary, replacing $d$ by an equivalent metric $D$ defined as
\[
D(x,y)=\sup_{i\in\mathbb{Z}}d(f^i(x),f^i(y))
\]
for all $x,y\in X$, we may also assume that $d(f(x),f(y))=d(x,y)$ for all $x,y\in X$. Fix $x\in X$ and note that
\[
f^i(B_\epsilon(x))=B_\epsilon(f^i(x))
\]
for all $\epsilon>0$ and $i\ge0$.  Given any $\epsilon>0$, since $f$ is minimal, we have
\[
B_\epsilon(x)\cap f^i(B_\epsilon(x))\ne\emptyset
\]
for some $i>0$. Letting
\[
l_\epsilon=\min\{i>0\colon B_\epsilon(x)\cap f^i(B_\epsilon(x))\ne\emptyset\},
\]
we obtain
\begin{itemize}
\item[(1)] $B_\epsilon(x)\cap f^{l_\epsilon}(B_\epsilon(x))\ne\emptyset$,
\item[(2)] $f^i(B_\epsilon(x))\cap f^j(B_\epsilon(x))=\emptyset$ for all $0\le i<j<l_\epsilon$.
\end{itemize}
From (1), it follows that
\[
B_\epsilon(x)\cap B_\epsilon(f^{l_\epsilon}(x))=B_\epsilon(x)\cap f^{l_\epsilon}(B_\epsilon(x))\ne\emptyset
\]
and so $B_\epsilon(x)=B_\epsilon(f^{l_\epsilon}(x))=f^{l_\epsilon}(B_\epsilon(x))$. Since $f$ is minimal and
\[
\bigsqcup_{i=0}^{l_\epsilon-1}f^i(B_\epsilon(x))
\]
is a closed $f$-invariant subset of $X$, we obtain
\[
X=\bigsqcup_{i=0}^{l_\epsilon-1}f^i(B_\epsilon(x)).
\]
We fix a sequence $\epsilon_j>0$, $j\ge1$, such that $\epsilon_1>\epsilon_2>\cdots$ and $\lim_{j\to\infty}\epsilon_j=0$. Note that
\[
f^i(B_{\epsilon_{j+1}}(x))\subset f^i(B_{\epsilon_j}(x))
\]
for all $j\ge1$ and $i\ge0$. Letting $m_j=l_{\epsilon_j}$, $j\ge1$, we have $m_j|m_{j+1}$ for all $j\ge1$. If
\[
\sup_{j\ge1}m_j<\infty,
\]
then $X$ is clearly a periodic orbit. Consider the case where
\[
\lim_{j\to\infty}m_j=\infty.
\]
Following the notation in Section 1, we define a map $h\colon X_m\to X$ by: for all $y\in X$ and $z=(z_j)_{j\ge1}\in X_m$, $h(z)=y$ if and only if 
\[
y\in\bigcap_{j\ge1}f^{z_j}(B_{\epsilon_j}(x))=\bigcap_{j\ge1}B_{\epsilon_j}(f^{z_j}(x)).
\]
Since $h$ is a homeomorphism and satisfies
\[
f\circ h=h\circ g_m,
\]
we conclude that $X$ is an odometer, completing the proof.
\end{proof}

A {\em partition} of $X$ is a family of disjoint subsets of $X$ whose union is $X$. For a subset $S$ of $X$ and a partition $\mathcal{P}$ of $X$, the notation $S\prec\mathcal{P}$ means $S\subset A$ for some $A\in\mathcal{P}$. By using Lemma 2.1, we prove the following (see Corollary 2.5 of \cite{BK}).

\begin{lem}
If a minimal continuous map $f\colon X\to X$ satisfies $X=RR(f)$, then $X$ is a periodic orbit or an odometer.
\end{lem}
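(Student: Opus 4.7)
The strategy is to construct, for each $\epsilon>0$, a finite clopen partition of $X$ that is cyclically permuted by $f$ and has mesh at most $\epsilon$, and then take a refining sequence of such partitions to exhibit $X$ as an inverse limit, much as at the end of the proof of Lemma 2.1.

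First, for every integer $n\ge 1$ we have $X=RR(f^n)$: given $y\in X$ and $\epsilon>0$, the hypothesis yields $M$ with $d(f^{iM}(y),y)\le\epsilon$ for all $i\ge0$, and specializing to $i$ of the form $kn$ shows $d((f^n)^{kM}(y),y)\le\epsilon$ for all $k\ge0$. Consequently every $y\in X$ is an $f^n$-minimal point, so the $f^n$-minimal sets form a partition $\mathcal{M}_n$ of $X$. Because $f$ commutes with $f^n$ it permutes $\mathcal{M}_n$, and by minimality of $f$ this permutation is transitive; since $f^n(A)=A$ for every $A\in\mathcal{M}_n$, the period $l_n$ of this transitive permutation divides $n$, and $\mathcal{M}_n=\{A,f(A),\dots,f^{l_n-1}(A)\}$. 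In particular $\mathcal{M}_n$ is finite, so each of its elements is clopen (being the complement of the finite union of the other elements).

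The key step is to show that for every $\epsilon>0$ there is $n$ such that every element of $\mathcal{M}_n$ has diameter at most $\epsilon$. For each $y\in X$ choose $n_y$ with $d(f^{kn_y}(y),y)\le\epsilon/2$ for all $k\ge0$; the $f^{n_y}$-minimal set $M(y)\in\mathcal{M}_{n_y}$ containing $y$, namely the closure of $\{f^{kn_y}(y):k\ge0\}$, then lies in $B_{\epsilon/2}(y)$ and so has diameter at most $\epsilon$. Compactness of $X$ extracts a finite subcover $\{M(y_1),\dots,M(y_r)\}$ from the clopen cover $\{M(y):y\in X\}$; set $N=\mathrm{lcm}(n_{y_1},\dots,n_{y_r})$. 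Any $M'\in\mathcal{M}_N$ contains some point $z$, and choosing $k$ with $z\in M(y_k)$, the divisibility $n_{y_k}\mid N$ shows that $M'$ (the $f^N$-orbit closure of $z$) is contained in the $f^{n_{y_k}}$-orbit closure of $z$, which equals $M(y_k)$ by $f^{n_{y_k}}$-minimality of the latter together with $z\in M(y_k)$; hence $M'\subset M(y_k)$ has diameter at most $\epsilon$. This uniform-mesh step is the principal obstacle, since regular recurrence is pointwise and each $y$ carries its own period $n_y$; what makes it possible to consolidate these into a single $N$ is the clopen structure of $\mathcal{M}_n$ together with compactness of $X$.

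Given this, the conclusion runs parallel to the end of the proof of Lemma 2.1. Pick $\epsilon_j\downarrow 0$ and choose corresponding $N_j$; after replacing $N_{j+1}$ by $\mathrm{lcm}(N_j,N_{j+1})$ one may assume $N_j\mid N_{j+1}$, whence $\mathcal{M}_{N_{j+1}}$ refines $\mathcal{M}_{N_j}$ and the cardinalities $l_j=l_{N_j}$ satisfy $l_j\mid l_{j+1}$. Fix $x_0\in X$ and let $A_j\in\mathcal{M}_{N_j}$ be the element containing $x_0$, so $A_{j+1}\subset A_j$. If $\sup_j l_j<\infty$, then $\mathcal{M}_{N_j}$ stabilizes to some cardinality $l$ with mesh tending to $0$, forcing $|X|=l$ and $X$ to be a periodic orbit. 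Otherwise $l_j\to\infty$, and with $m=(l_j)_{j\ge1}$ the map $h\colon X_m\to X$ defined by $h(z)=\bigcap_{j\ge1}f^{z_j}(A_j)$ is a homeomorphism satisfying $f\circ h=h\circ g_m$ (the intersection is a singleton because its terms are closed, nested, and have diameters at most $\epsilon_j\to 0$), exhibiting $X$ as an odometer.
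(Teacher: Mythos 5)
Your proof is correct, but it takes a more direct route than the paper's. The paper works with the same basic building blocks---the clopen cyclic partitions of $X$ into small orbit closures $\overline{\{f^{kn_y}(y)\colon k\ge0\}}$ and their $f$-images, obtained from $X=RR(f)=RR(f^{n_y})\subset M(f^{n_y})$ together with minimality of $f$---but it uses a finite subcover of such partitions only to prove that $X$ is totally disconnected and that $f$ is equicontinuous, and then hands the construction of the conjugacy over to Lemma 2.1, which redoes the partition argument with balls of an invariant ultrametric. You instead consolidate the finitely many pointwise periods by passing to $N=\mathrm{lcm}(n_{y_1},\dots,n_{y_r})$ and to the common refinement $\mathcal{M}_N$, which gives for every $\epsilon>0$ a single cyclic clopen partition of mesh at most $\epsilon$; a refining sequence of these then yields the conjugacy with $g_m$ directly, exactly as in the endgame of Lemma 2.1 but without introducing an ultrametric, proving equicontinuity, or needing $f$ to be a homeomorphism. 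What your route buys is self-containedness (Lemma 2.1 is no longer needed for Lemma 2.2); what the paper's route buys is the reusable intermediate facts (equicontinuity, total disconnectedness) and the stand-alone characterization in Lemma 2.1. Two assertions you leave unproved---that minimality of $f$ makes the induced permutation of $\mathcal{M}_n$ a single finite cycle (via the closed $f$-invariant set $\bigcup_{i=0}^{n-1}f^i(A)$, using $f^n(A)=A$), and that refinement plus the cyclic structure forces $l_j\mid l_{j+1}$ (since $f^{l_{j+1}}(A_j)\cap A_j\supset A_{j+1}\ne\emptyset$ forces $f^{l_{j+1}}(A_j)=A_j$)---are routine and are treated with the same brevity in the paper itself, so they do not constitute gaps.
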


\begin{proof}
Let $f\colon X\to X$ be a minimal continuous map with $X=RR(f)$. Due to Lemma 2.1, it suffices to show that $X$ is totally disconnected and $f$ is equicontinuous. Fix $\epsilon>0$. Given any $x\in X$, since $x\in RR(f)$, we have
\[
\overline{\{f^{kn_x}(x)\colon k\ge0\}}\subset B_\epsilon(x) 
\]
for some $n_x>0$. Let
\[
A_x=\overline{\{f^{kn_x}(x)\colon k\ge0\}}
\]
and note that $A_x$ is a closed $f^{n_x}$-invariant subset of $X$. Since
\[
x\in RR(f)=RR(f^{n_x})\subset M(f^{n_x}),
\]
$A_x$ is a minimal set for $f^{n_x}$. Since $f^i(A_x)$ is a minimal set for $f^{n_x}$ for all $i\ge0$, $f^i(A_x)\cap f^j(A_x)\ne\emptyset$ implies $f^i(A_x)=f^j(A_x)$ for all $0\le i<j$. Letting
\[
l_x=\min\{l>0\colon A_x\cap f^l(A_x)\ne\emptyset\},
\]
we obtain
\begin{itemize}
\item[(1)] $f^{l_x}(A_x)=A_x$,
\item[(2)] $f^i(A_x)\cap f^j(A_x)=\emptyset$ for all $0\le i<j<l_x$.
\end{itemize}
Since $f$ is minimal and
\[
\bigsqcup_{i=0}^{l_x-1}f^i(A_x)
\]
is a closed $f$-invariant subset of $X$, we have
\[
X=\bigsqcup_{i=0}^{l_x-1}f^i(A_x).
\]
Let
\[
\mathcal{P}_x=\{f^i(A_x)\colon 0\le i\le l_x-1\}
\]
and note that $\mathcal{P}_x$ is a clopen partition of $X$. Note also that $x\in A_x$ and ${\rm diam}A_x\le2\epsilon$. Since $x\in X$ and $\epsilon>0$ are arbitrary, $X$ is totally disconnected. The form of $\mathcal{P}_x$ implies that  for any subset $S$ of $X$, if $S\prec\mathcal{P}_x$, then $f(S)\prec\mathcal{P}_x$. We take $x_1,x_2,\dots,x_m\in X$ such that
\[
X=\bigcup_{j=1}^m A_{x_j}.
\]
If $\delta>0$ is sufficiently small, then given any $x,y\in X$ with $d(x,y)\le\delta$, we have
\[
\{x,y\}\prec\mathcal{P}_{x_j}
\]
for all $1\le j\le m$. It follows that
\[
\{f^i(x),f^i(y)\}\prec\mathcal{P}_{x_j}
\]
for all $1\le j\le m$ and $i\ge0$. For each $i\ge0$, we have $f^i(x)\in A_{x_{j_i}}$ for some $1\le j_i\le m$. This with
\[
\{f^i(x),f^i(y)\}\prec\mathcal{P}_{x_{j_i}}
\]
implies $f^i(y)\in A_{x_{j_i}}$ and so $d(f^i(x),f^i(y))\le{\rm diam}A_{x_{j_i}}\le2\epsilon$ for all $i\ge0$. Since $\epsilon>0$ is arbitrary, we  conclude that $f$ is equicontinuous, completing the proof. 
\end{proof}

\section{Proofs of Theorems 1.1 and 1.2}

In this section, we prove the main results of this paper (Theorems 1.1 and 1.2). We first prove a simple lemma. For a sequence $\xi=(x_i)_{i\ge0}$ of points in $X$, we define the $\omega$-limit set $\omega(\xi)$ as the set of $y\in X$ such that
\[
\lim_{j\to\infty}x_{i_j}=y
\]
for some sequence $0\le i_1<i_2<\cdots$. Note that for any continuous map $f\colon X\to X$ and $x\in X$, we have $\omega(x,f)=\omega((f^i(x))_{i\ge0})$. 

\begin{lem}
Let $f\colon X\to X$ be a continuous map. For any $\epsilon>0$ and two sequences $\xi=(x_i)_{i\ge0}$, $\xi'=(y_i)_{i\ge0}$ of points in $X$, if
\[
\limsup_{i\to\infty}d(x_i,y_i)\le\epsilon,
\]
then $\omega(\xi)\subset B_\epsilon(\omega(\xi'))$ and $\omega(\xi')\subset B_\epsilon(\omega(\xi))$.
\end{lem}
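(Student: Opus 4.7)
The plan is to prove the inclusion $\omega(\xi)\subset B_\epsilon(\omega(\xi'))$ directly; the other inclusion then follows by symmetry, since the hypothesis is symmetric in $\xi$ and $\xi'$. I would argue by an elementary two-stage subsequence extraction using compactness of $X$.

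Fix $y\in\omega(\xi)$, so by definition there is a sequence $0\le i_1<i_2<\cdots$ with $x_{i_j}\to y$ as $j\to\infty$. Since $X$ is compact, the sequence $(y_{i_j})_{j\ge1}$ admits a convergent subsequence $y_{i_{j_k}}\to z$ as $k\to\infty$ for some $z\in X$. The indices $i_{j_k}$ are strictly increasing and tend to infinity, so by the definition of $\omega(\xi')$ we have $z\in\omega(\xi')$. Moreover, $x_{i_{j_k}}\to y$ as well (subsequence of a convergent sequence), and by continuity of the metric
\[
d(y,z)=\lim_{k\to\infty}d(x_{i_{j_k}},y_{i_{j_k}})\le\limsup_{i\to\infty}d(x_i,y_i)\le\epsilon.
\]
Hence $y\in B_\epsilon(z)\subset B_\epsilon(\omega(\xi'))$, as required.

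There is no real obstacle here: the only ingredients are compactness of $X$ (to extract the second subsequence), continuity of $d$ (to pass the distance bound to the limit), and the symmetry of the hypothesis. The lemma will presumably be used in the proof of Theorem 1.1 by applying it with $\xi=(f^i(x))_{i\ge0}$ and $\xi'=(f^i(x_\epsilon))_{i\ge0}$, so that $\omega(x,f)$ and $\omega(x_\epsilon,f)$ are forced to lie within Hausdorff distance $\epsilon$ of each other.
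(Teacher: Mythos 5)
Your proof is correct and is essentially the same argument as the paper's: extract a subsequence along which $x_{i_j}\to y$, pass to a further subsequence so that $y_{i_{j_k}}$ converges to some $z\in\omega(\xi')$, and use the $\limsup$ bound to get $d(y,z)\le\epsilon$, with the second inclusion by symmetry. No issues.
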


\begin{proof}
For any $z\in\omega(\xi)$, there are a sequence $0\le i_1<i_2<\cdots$ and $w\in X$ such that $\lim_{j\to\infty}x_{i_j}=z$ and $\lim_{j\to\infty}y_{i_j}=w$. Note that $w\in\omega(\xi')$. By
\[
\limsup_{i\to\infty}d(x_i,y_i)\le\epsilon,
\]
we obtain $d(z,w)\le\epsilon$ and so $z\in B_\epsilon(\omega(\xi'))$. Since $z\in\omega(\xi)$ is arbitrary, we obtain $\omega(\xi)\subset B_\epsilon(\omega(\xi'))$. The proof of $\omega(\xi')\subset B_\epsilon(\omega(\xi))$ is similar.
\end{proof}

Let us prove Theorem 1.1.

\begin{proof}[Proof of Theorem 1.1]
We fix $x_{\epsilon}\in\mathbb{A}(f)$, $\epsilon>0$, such that
\[
\limsup_{i\to\infty}d(f^i(x),f^i(x_\epsilon))\le\epsilon
\]
for all $\epsilon>0$. Due to Lemma 2.2, it suffices to show that $\omega(x,f)$ is a minimal set for $f$ and satisfies $\omega(x,f)\subset RR(f)$. For any $z\in\omega(x,f)$ and $\epsilon>0$, there are a sequence $0\le i_1<i_2<\cdots$ and $w\in X$ such that $\lim_{j\to\infty}f^{i_j}(x)=z$ and $\lim_{j\to\infty}f^{i_j}(x_\epsilon)=w$. Note that $w\in\omega(x_\epsilon,f)$.  By
\[
\limsup_{i\to\infty}d(f^i(x),f^i(x_\epsilon))\le\epsilon,
\]
we obtain
\[
\sup_{i\ge0}d(f^i(z),f^i(w))\le\epsilon.
\]
Since $x_\epsilon\in\mathbb{A}(f)$, $\omega(x_\epsilon,f)$ is a minimal set for $f$. By $w\in\omega(x_\epsilon,f)$, we obtain
\[
\omega(x_\epsilon,f)=\omega(w,f).
\]
From Lemma 3.1, it follows that
\[
\omega(x,f)\subset B_\epsilon(\omega(x_\epsilon,f))=B_\epsilon(\omega(w,f))\subset B_{2\epsilon}(\omega(z,f)).
\]
On the other hand, $x_\epsilon\in\mathbb{A}(f)$ implies
\[
w\in\omega(x_\epsilon,f)\subset RR(f). 
\]
We take $n>0$ such that
\[
\sup_{k\ge0}d(w,f^{kn}(w))\le\epsilon.
\] 
It follows that
\[
d(z,f^{kn}(z))\le d(z,w)+d(w,f^{kn}(w))+d(f^{kn}(w),f^{kn}(z))\le\epsilon+\epsilon+\epsilon=3\epsilon
\]
for all $k\ge0$, thus
\[
\sup_{k\ge0}d(z,f^{kn}(z))\le3\epsilon.
\]
Since $z\in\omega(x,f)$ and $\epsilon>0$ are arbitrary, we obtain $\omega(x,f)=\omega(z,f)$ and $z\in RR(f)$ for all $z\in\omega(x,f)$. We conclude that $\omega(x,f)$ is a minimal set for $f$ and satisfies $\omega(x,f)\subset RR(f)$, proving the theorem.
\end{proof}

For the proof of Theorem 1.2, we need a lemma that provides, in terms of pseudo-orbits, a sufficient condition for $x\in X$ to be approximated by $y\in\mathbb{A}(f)$ where $f\colon X\to X$ is a continuous map.  For a continuous map $f\colon X\to X$ and $\delta>0$, we say that a $\delta$-pseudo orbit $\xi=(x_i)_{i\ge0}$ of $f$ is {\em eventually periodic} if there are $k\ge0$ and $n>0$ such that $x_i=x_{i+n}$ for all $i\ge k$.

\begin{lem}
Let $f\colon X\to X$ be a continuous map. Let $\epsilon>0$ and let $\epsilon_j,\delta_j>0$, $j\ge0$, be two sequences of positive numbers such that
\begin{itemize}
\item[(1)] $\sum_{j=0}^\infty\epsilon_j\le\epsilon$,
\item[(2)] $\lim_{j\to\infty}\delta_j=0$.
\end{itemize}
Let $\xi_j=(x_i^{(j)})_{i\ge0}$, $j\ge0$, be a sequence of $\delta_j$-pseudo orbits of $f$ such that
\begin{itemize}
\item[(3)] for every $j\ge0$, $\xi_j$ is eventually periodic,
\item[(4)] $\sup_{i\ge0}d(x_i^{(j)},x_i^{(j+1)})\le\epsilon_j$ for all $j\ge0$.
\end{itemize}
Let $x_0=\lim_{j\to\infty}x_0^{(j)}$. Then, $x_0\in\mathbb{A}(f)$ and $d(x_0^{(0)},x_0)\le\epsilon$.
\end{lem}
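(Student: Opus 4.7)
The plan is to extract from the pseudo-orbits $\xi_j$ an actual $f$-orbit through $x_0$, and then apply Lemma 2.2 to $\omega(x_0,f)$. Conditions (1) and (4) make $(x_i^{(j)})_{j\ge 0}$ Cauchy uniformly in $i$, so for every $i$ the limit $x_i=\lim_{j\to\infty}x_i^{(j)}$ exists with
\[
\sup_{i\ge 0}d(x_i^{(j)},x_i)\le\eta_j:=\sum_{k\ge j}\epsilon_k,
\]
and $\eta_j\to 0$. Passing $j\to\infty$ in $d(f(x_i^{(j)}),x_{i+1}^{(j)})\le\delta_j$, using (2) and continuity of $f$, yields $f(x_i)=x_{i+1}$; hence $x_i=f^i(x_0)$ and $\sup_{i\ge 0}d(f^i(x_0),x_i^{(j)})\le\eta_j$. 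The bound $d(x_0^{(0)},x_0)\le\eta_0\le\epsilon$ is immediate, so it remains to show $x_0\in\mathbb{A}(f)$ by verifying that $\omega(x_0,f)$ is a minimal set contained in $RR(f)$.

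For regular recurrence, I would write each $\xi_j$ as eventually periodic with minimal period $n_j$ from some index $k_j$, so that $x_{i+kn_j}^{(j)}=x_i^{(j)}$ whenever $i\ge k_j$ and $k\ge 0$, and therefore $d(f^i(x_0),f^{i+kn_j}(x_0))\le 2\eta_j$ on the same range. For any $z\in\omega(x_0,f)$, choose a subsequence $i_\ell\to\infty$ with $i_\ell\ge k_j$ and $f^{i_\ell}(x_0)\to z$; continuity of $f^{kn_j}$ lets the bound pass to the limit as $d(z,f^{kn_j}(z))\le 2\eta_j$ for every $k\ge 0$. Since $\eta_j\to 0$, this places $z$ in $RR(f)$.

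The main obstacle is proving that $\omega(x_0,f)$ is itself minimal rather than merely a union of minimal sets, which I plan to do by showing $z'\in\overline{\{f^s(z):s\ge 0\}}$ for arbitrary $z,z'\in\omega(x_0,f)$. Given $\eta>0$, fix $j$ with $2\eta_j<\eta$ and let $P_j=\{x_{k_j}^{(j)},\dots,x_{k_j+n_j-1}^{(j)}\}$, whose $n_j$ elements are distinct by minimality of $n_j$; thus for $i\ge k_j$, $x_i^{(j)}=x_p^{(j)}$ forces $i\equiv p\pmod{n_j}$. Applying the pigeonhole principle to $P_j$ along a subsequence with $f^{i_\ell}(x_0)\to z$ and $i_\ell\ge k_j$ yields a sub-subsequence on which $x_{i_\ell}^{(j)}$ equals a fixed $w=x_p^{(j)}$, giving $d(z,w)\le\eta_j$ and $i_\ell\equiv p\pmod{n_j}$; choose analogously $w'=x_{p'}^{(j)}$ with $d(z',w')\le\eta_j$ and set $s=(p'-p)\bmod n_j$. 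Along the sub-subsequence one has $x_{i_\ell+s}^{(j)}=w'$, so $d(f^{i_\ell+s}(x_0),w')\le\eta_j$, and continuity of $f^s$ gives $f^{i_\ell+s}(x_0)\to f^s(z)$; in the limit $d(f^s(z),z')\le 2\eta_j<\eta$. Since $\eta>0$ was arbitrary, $z'\in\overline{\{f^s(z):s\ge 0\}}$; as $z\in RR(f)\subset M(f)$ is a minimal point this closure equals $\omega(z,f)$, which lies in $\omega(x_0,f)$, so $\omega(x_0,f)=\omega(z,f)$ is minimal. Lemma 2.2 applied to $f|_{\omega(x_0,f)}$ then identifies $\omega(x_0,f)$ as a periodic orbit or an odometer, completing the proof that $x_0\in\mathbb{A}(f)$.
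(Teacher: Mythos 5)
Your overall strategy matches the paper's: extract the limit orbit $x_i=f^i(x_0)$ with $\sup_i d(x_i^{(j)},x_i)\le\eta_j$, show every $z\in\omega(x_0,f)$ is regularly recurrent via the eventual periods $n_j$, show $\omega(x_0,f)$ is minimal, and then apply Lemma~2.2. The only structural difference is cosmetic: where the paper invokes Lemma~3.1 to deduce $\omega(x_0,f)\subset B_{2\zeta_j}(\omega(z,f))$, you argue directly that $z'\in\overline{\{f^s(z):s\ge0\}}$ for any $z,z'\in\omega(x_0,f)$; both collapse to $\omega(x_0,f)=\omega(z,f)$ in the limit.

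There is, however, a genuine error in the minimality step. You claim the $n_j$ entries $x_{k_j}^{(j)},\dots,x_{k_j+n_j-1}^{(j)}$ are pairwise distinct ``by minimality of $n_j$,'' and deduce that $x_i^{(j)}=x_p^{(j)}$ (with $i\ge k_j$) forces $i\equiv p\pmod{n_j}$. This is false: a periodic sequence of minimal period $n$ can repeat values within one period. For instance $1,2,1,3,1,2,1,3,\dots$ has minimal period $4$ yet its first and third entries coincide, so equality of terms does not detect the residue class. Since your subsequent argument (choosing $s=(p'-p)\bmod n_j$ and asserting $x_{i_\ell+s}^{(j)}=w'$) relies precisely on knowing the residue of $i_\ell$ modulo $n_j$, the chain of deductions breaks as written.

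The repair is straightforward and is exactly what the paper does implicitly: instead of applying the pigeonhole principle to the finite set of \emph{values} $P_j$ and then trying to recover the residue, apply it to the finitely many \emph{residues} $i_\ell\bmod n_j$. This yields directly a sub-subsequence along which $i_\ell\equiv p\pmod{n_j}$ for a fixed $p\in\{k_j,\dots,k_j+n_j-1\}$, and then $x_{i_\ell}^{(j)}=x_p^{(j)}=w$ follows from eventual periodicity with no distinctness hypothesis. With that change the rest of your argument goes through.
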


\begin{proof}
By $(1)$ and $(4)$, $(x_i^{(j)})_{j\ge0}$ is a Cauchy sequence for all $i\ge0$. Let $x_i=\lim_{j\to\infty}x_i^{(j)}$ for each $i\ge0$. Let $\zeta_j=\sum_{a=j}^\infty\epsilon_a$, $j\ge0$, and note that
\[
\sup_{i\ge0}d(x_i^{(j)},x_i)\le\zeta_j
\]
for all $j\ge0$, in particular, $d(x_0^{(0)},x_0)\le\zeta_0\le\epsilon$. For any $i\ge0$, since
\begin{align*}
d(f(x_i),x_{i+1})&\le d(f(x_i),f(x_i^{(j)}))+d(f(x_i^{(j)}),x_{i+1}^{(j)})+d(x_{i+1}^{(j)},x_{i+1})\\
&\le d(f(x_i),f(x_i^{(j)}))+\delta_j+d(x_{i+1}^{(j)},x_{i+1})
\end{align*}
for all $j\ge0$, by $(2)$, we obtain $f(x_i)=x_{i+1}$. By (3), there are $k_j\ge0$, $n_j>0$, $j\ge0$, such that
\[
x_i^{(j)}=x_{i+n_j}^{(j)}
\]
for all $j\ge0$ and $i\ge k_j$. Given any $z\in\omega(x_0,f)$ and $j\ge0$, there are a sequence $k_{j}\le l_1<l_2<\cdots$ and $k_j\le i_j<k_j+n_j$ such that
\begin{itemize}
\item $\lim_{m\to\infty}f^{l_m}(x_0)=z$,
\item $l_m\equiv i_j\pmod{n_j}$ for all $m\ge1$.
\end{itemize}
It follows that
\[
x_{l_m+\beta}^{(j)}=x_{i_j+\beta}^{(j)}
\]
for all $m\ge1$ and $\beta\ge0$. We obtain
\[
\sup_{\beta\ge0}d(x_{i_j+\beta}^{(j)},f^\beta(f^{l_m}(x_0)))=\sup_{\beta\ge0}d(x_{l_m+\beta}^{(j)},x_{l_m+\beta})\le\zeta_j
\]
for all $m\ge1$. Letting $m\to\infty$, we obtain
\[
\sup_{\beta\ge0}d(x_{i_j+\beta}^{(j)},f^\beta(z))\le\zeta_j.
\]
Let $\xi=(f^i(x_0))_{i\ge0}=(x_i)_{i\ge0}$ and $\xi'_j=(x_{i_j+\beta}^{(j)})_{\beta\ge0}$. By Lemma 3.1, we obtain
\[
\omega(x_0,f)=\omega(\xi)\subset B_{\zeta_j}(\omega(\xi_j))=B_{\zeta_j}(\omega(\xi'_j))\subset B_{2\zeta_j}(\omega(z,f)).
\]
On the other hand, for every $k\ge0$, we have
\begin{align*}
d(z,f^{kn_j}(z))&\le d(z,x_{i_j}^{(j)})+d(x_{i_j}^{(j)},x_{i_j+kn_j}^{(j)})+d(x_{i_j+kn_j}^{(j)},f^{kn_j}(z))\\
&\le\zeta_j+0+\zeta_j=2\zeta_j.
\end{align*}
Note that $\lim_{j\to\infty}\zeta_j=0$. Since $z\in\omega(x_0,f)$ and $j\ge0$ are arbitrary, we obtain $\omega(x_0,f)=\omega(z,f)$ and $z\in RR(f)$ for all $z\in\omega(x_0,f)$. We conclude that $\omega(x_0,f)$ is a minimal set for $f$ and satisfies $\omega(x_0,f)\subset RR(f)$. By Lemma 2.2, we obtain $x_0\in\mathbb{A}(f)$, proving the lemma.
\end{proof}

In fact, Theorem 1.2 is a direct consequence of Lemma 3.2 and the following lemma.

\begin{lem}
Let $f\colon X\to X$ be a continuous map. Let $\epsilon_j,\delta_j>0$, $j\ge0$, be two sequences of positive numbers such that
\begin{itemize}
\item[(1)] for any $j\ge0$, every $\delta_j$-pseudo orbit of $f$ is $\epsilon_j$-shadowed by some point of $X$.
\end{itemize}
Let $\xi_0=(x_i^{(0)})_{i\ge0}$ be an eventually periodic $\delta_0$-pseudo orbit of $f$. Then, there is a sequence $\xi_j=(x_i^{(j)})_{i\ge0}$, $j\ge0$, of $\delta_j$-pseudo orbits of $f$ such that
\begin{itemize}
\item[(2)] for every $j\ge0$, $\xi_j$ is eventually periodic,
\item[(3)] $\sup_{i\ge0}d(x_i^{(j)},x_i^{(j+1)})\le\epsilon_j$ for all $j\ge0$.
\end{itemize}
\end{lem}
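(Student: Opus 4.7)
The plan is to build the $\xi_j$ by induction on $j$, with $\xi_0$ given. Suppose $\xi_j = (x_i^{(j)})_{i \ge 0}$ is an eventually periodic $\delta_j$-pseudo orbit with pre-period $k_j$ and period $n_j$, so that $x_i^{(j)} = x_{i+n_j}^{(j)}$ for all $i \ge k_j$. Applying the shadowing hypothesis (1) at level $j$ to $\xi_j$, I first obtain a point $y_j \in X$ with $d(f^i(y_j), x_i^{(j)}) \le \epsilon_j$ for all $i \ge 0$.

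Next I close up the true orbit $(f^i(y_j))_{i \ge 0}$ into an eventually periodic $\delta_{j+1}$-pseudo orbit that stays $\epsilon_j$-close to $\xi_j$ for \emph{every} $i$, not merely on a finite prefix. Since $X$ is compact, the subsequence $(f^{k_j + l n_j}(y_j))_{l \ge 0}$ has two terms within $\delta_{j+1}$: pick integers $0 \le l_1 < l_2$ with $d(f^{k_j + l_1 n_j}(y_j), f^{k_j + l_2 n_j}(y_j)) \le \delta_{j+1}$, and set $a = k_j + l_1 n_j$, $b = k_j + l_2 n_j$. Define $\xi_{j+1}$ by $x_i^{(j+1)} = f^i(y_j)$ for $0 \le i < b$ and by $x_i^{(j+1)} = x_{i - (b-a)}^{(j+1)}$ for $i \ge b$; then $\xi_{j+1}$ is eventually periodic with pre-period $a$ and period $b - a = (l_2 - l_1) n_j$, which is crucially a multiple of $n_j$.

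The remaining verifications are routine. The only non-trivial pseudo-orbit transition is the closing step, whose gap equals $d(f^b(y_j), f^a(y_j)) \le \delta_{j+1}$, so $\xi_{j+1}$ is a $\delta_{j+1}$-pseudo orbit. For proximity: on $0 \le i < b$ the bound $d(x_i^{(j+1)}, x_i^{(j)}) \le \epsilon_j$ is immediate from shadowing; on the periodic tail $i \ge b$, writing $i = a + q(b-a) + r$ with $0 \le r < b-a$ gives $x_i^{(j+1)} = f^{a+r}(y_j)$ by construction, and since $b - a$ is a multiple of $n_j$ with $a + r \ge k_j$, the periodicity of $\xi_j$ forces $x_i^{(j)} = x_{a+r}^{(j)}$, so shadowing again supplies the $\epsilon_j$ estimate. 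The main subtlety — and the reason for seeking close returns along the arithmetic progression $k_j + l n_j$ rather than at arbitrary indices — is exactly this commensurability: were the period of $\xi_{j+1}$ not a multiple of $n_j$, the two eventually periodic sequences would drift against each other on the tail and the uniform estimate (3) would break down.
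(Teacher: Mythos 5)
Your proof is correct and takes essentially the same route as the paper: shadow $\xi_j$ by a true orbit, find a close return of that orbit along indices $k_j+l n_j$ past the pre-period (the paper gets this via an $\omega$-limit point of $f^{n_j}$, you via compactness/pigeonhole, which is the same idea), and close it into an eventually periodic pseudo orbit whose period is a multiple of $n_j$, exactly the commensurability needed for the uniform estimate (3) on the tail.
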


\begin{proof}
The proof is by induction on $j$. Assume that $\xi_j$ is given for some $j\ge0$. Since $\xi_j$ is eventually periodic, there are $k\ge0$ and $n>0$ such that
\[
x_i^{(j)}=x_{i+n}^{(j)}
\]
for all $i\ge k$. Since $\xi_j$ is a $\delta_j$-pseudo orbit of $f$, by (1), we obtain $y\in X$ such that
\[
\sup_{i\ge0}d(f^i(y),x_i^{(j)})\le\epsilon_j.
\]
By taking $z\in\omega(f^k(y),f^n)$, we obtain a sequence $0\le l_1<l_2<\cdots$ such that
\[
\lim_{a\to\infty}f^{k+l_a n}(y)=\lim_{a\to\infty}f^{l_a n}(f^k(y))=z.
\]
It follows that
\[
d(f^{k+ln}(y),f^{k+mn}(y))\le\delta_{j+1}
\]
for some $0\le l<m$. Let
\[
\alpha=(y,f(y),\dots,f^{k+ln}(y))
\]
and let
\[
\beta=(f^{k+ln}(y),f^{k+ln+1}(y),\dots,f^{k+mn-1}(y),f^{k+ln}(y)).
\]
Letting
\[
\xi_{j+1}=(x_i^{(j+1)})_{i\ge0}=\alpha\beta\beta\beta\cdots,
\]
we see that
\begin{itemize}
\item $\xi_{j+1}$ is a $\delta_{j+1}$-pseudo orbit of $f$,
\item $\xi_{j+1}$ is eventually periodic,
\item $\sup_{i\ge0}d(x_i^{(j)},x_i^{(j+1)})\le\epsilon_j$.
\end{itemize}
Thus, the induction is complete.
\end{proof}

Finally, we prove Theorem 1.2.

\begin{proof}[Proof of Theorem 1.2]
Given any $x\in X$ and $\epsilon>0$, we fix a sequence $\epsilon_j>0$, $j\ge0$, of positive numbers  such that
\begin{itemize}
\item $\sum_{j=0}^\infty\epsilon_j\le\epsilon$.
\end{itemize}
Since $f$ has the shadowing property, we can take a sequence $\delta_j>0$, $j\ge0$, of positive numbers such that
\begin{itemize}
\item $\lim_{j\to\infty}\delta_j=0$,
\item for any $j\ge0$, every $\delta_j$-pseudo orbit of $f$ is $\epsilon_j$-shadowed by some point of $X$.
\end{itemize}
By Lemma 3.3, taking an eventually periodic $\delta_0$-pseudo orbit $\xi_0=(x_i^{(0)})_{i\ge0}$ of $f$ with $x_0^{(0)}=x$, we obtain a sequence $\xi_j=(x_i^{(j)})_{i\ge0}$, $j\ge0$, of $\delta_j$-pseudo orbits of $f$ such that
\begin{itemize}
\item for every $j\ge0$, $\xi_j$ is eventually periodic,
\item $\sup_{i\ge0}d(x_i^{(j)},x_i^{(j+1)})\le\epsilon_j$ for all $j\ge0$.
\end{itemize}
By Lemma 3.2, letting $x_0=\lim_{j\to\infty}x_0^{(j)}$, we obtain $x_0\in\mathbb{A}(f)$ and 
\[
d(x,x_0)=d(x_0^{(0)},x_0)\le\epsilon.
\]
Since $x\in X$ and $\epsilon>0$ are arbitrary, we conclude that $X=\overline{\mathbb{A}(f)}$, completing the proof of the theorem.
\end{proof}

\section{Remarks}

In this final section, we recall the notion of chain continuity and make some remarks on its relevance to the main results.

\begin{defi}
\normalfont
Given a continuous map $f\colon X\to X$ and $x\in X$, we say that $f$ is {\em chain continuous} at $x$ if for any $\epsilon>0$, there is $\delta>0$ such that every $\delta$-pseudo orbit $(x_i)_{i\ge0}$ of $f$ with $x_0=x$ is $\epsilon$-shadowed by $x$ \cite{A}. We denote by $CC(f)$ the set of chain continuity points for $f$. 
\end{defi}

\begin{defi}
\normalfont
Let $f\colon X\to X$ be a continuous map. We say that a closed $f$-invariant subset $S$ of $X$ is {\em chain stable} if for any $\epsilon>0$, there is $\delta>0$ such that every $\delta$-chain $(x_i)_{i=0}^k$ of $f$ with $x_0\in S$ satisfies $d(x_k,S)=\inf_{y\in S}d(x_k,y)\le\epsilon$.
\end{defi}

Let $f\colon X\to X$ be a continuous map. A {\em chain component} for $f$ is by definition a maximal internally  chain transitive subset of $X$. By Theorem 7.5 of \cite{AHK}, we know that for any $x\in X$, $x\in CC(f)$ if and only if $\omega(x,f)$ coincides with a {\em terminal} (i.e., chain stable) chain component for $f$ and is a periodic orbit or an odometer (see also Introduction of \cite{K}). In particular, we have
\[
CC(f)\subset\mathbb{A}(f).
\]
For any $x,y\in X$, if $x\in CC(f)$ and
\[
\liminf_{i\to\infty}d(f^i(x),f^i(y))=0,
\] 
then we easily see that $y\in CC(f)$ and
\[
\limsup_{i\to\infty}d(f^i(x),f^i(y))=0.
\]

We say that a continuous map $f\colon X\to X$ is {\em almost chain continuous} if $CC(f)$ is a dense $G_\delta$-subset of $X$.  By Theorem 1.2 of \cite{K}, we know that for any continuous map $f\colon X\to X$, if $X$ is locally connected, $f$ has the shadowing property, and if $CR(f)$ is totally disconnected, then $f$ is almost chain continuous. As a corollary, generic homeomorphisms or continuous self-maps of a closed differentiable manifold are almost chain continuous (see Corollary 1.1 of \cite{K}). Note that if a continuous map $f\colon X\to X$ is almost chain continuous, then since
\[
CC(f)\subset\mathbb{A}(f),
\]
$\mathbb{A}(f)$ is a residual subset of $X$.


\begin{thebibliography}{99}

\bibitem{A} E.\:Akin, On chain continuity. Discrete Contin. Dynam. Systems 2 (1996), 111--120.

\bibitem{AG} E.\:Akin, E.\:Glasner, Residual properties and almost equicontinuity. J. Anal. Math. 84 (2001), 243--286.

\bibitem{AHK} E.\:Akin, M.\:Hurley, J.\:Kennedy, Dynamics of topologically generic homeomorphisms. Mem. Amer. Math. Soc. 164 (2003).

\bibitem{An} D.V.\:Anosov, Geodesic flows on closed Riemann manifolds with negative curvature. Proc. Steklov Inst. Math. 90 (1967), 235 p.

\bibitem{AH} N.\:Aoki, K.\:Hiraide, Topological theory of dynamical systems. Recent advances. North--Holland Mathematical Library, 52. North--Holland Publishing Co., 1994.

\bibitem{BK} L.\:Block, J.\:Keesling, A characterization of adding machine maps. Topology Appl. 140 (2004), 151--161.

\bibitem{B} R.\:Bowen, Equilibrium states and the ergodic theory of Anosov diffeomorphisms. Lecture Notes in Mathematics, 470. Springer--Verlag, 1975.

\bibitem{K} N.\:Kawaguchi, Shadowing and the basins of terminal chain components. Can. Math. Bull. (2024), First View, 11 pp. DOI: https://doi.org/10.4153/S0008439524000730

\bibitem{Ku} P.\:K\r{u}rka, Topological and Symbolic Dynamics. Societe Mathematique de France, Paris, 2003.

\bibitem{LO1} J.\:Li, P.\:Oprocha, Shadowing property, weak mixing and regular recurrence. J. Dyn. Differ. Equations 25 (2013), 1233--1249.

\bibitem{LO2} J.\:Li, P.\:Oprocha, Properties of invariant measures in dynamical systems with the shadowing property. Ergodic Theory Dyn. Syst. 38 (2018), 2257--2294.

\bibitem{Ma} J.H.\:Mai, The structure of equicontinuous maps. Trans. Amer. Math. Soc. 355 (2003), 4125--4136.

\bibitem{M} T.K.S.\:Moothathu, Implications of pseudo-orbit tracing property for continuous maps on compacta. Topology Appl. 158 (2011), 2232--2239.

\bibitem{MO} T.K.S.\:Moothathu, P.\:Oprocha, Shadowing, entropy and minimal subsystems. Monatsh. Math. 172 (2013), 357--378.

\bibitem{P} S.Yu.\:Pilyugin, Shadowing in dynamical systems. Lecture Notes in Mathematics, 1706. Springer--Verlag, 1999.

\end{thebibliography}
\end{document}